\documentclass[11pt]{article}
\usepackage{amsmath,amssymb,amsbsy,amsfonts,amsthm,latexsym,
            amsopn,amstext,amsxtra,euscript,amscd,amsthm,lipsum,hyperref}
\usepackage[]{caption,multirow}
\usepackage[lined,boxed,commentsnumbered,ruled,linesnumbered]{algorithm2e}
\usepackage{longtable,geometry,setspace}

\newtheorem{rem}{Remark}
\newtheorem{remark}[rem]{Remark}

\newtheorem{thm}{Theorem}
\newtheorem{theorem}[thm]{Theorem}

\newtheorem{conj}{Conjecture}

\newtheorem{exam}{Example}
\newtheorem{example}[exam]{Example}


\def\\{\cr}
\def\({\left(}
\def\){\right)}
\def\[{\left[}
\def\]{\right]}
\def\<{\langle}
\def\>{\rangle}

\def\cA{{\mathcal A}}
\def\cB{{\mathcal B}}

\def\F{{\mathbb F}}

\begin{document}

\title{\bf Representing the inverse map as a composition of quadratics in a finite field of characteristic $2$}

\author{\Large  Florian Luca$^{1,2}$,    \Large  Santanu Sarkar$^3$,   Pantelimon St\u anic\u a$^4$\\
\vspace{0.3cm}\\  
\small $^1$ School of Mathematics, University of the Witwatersrand,\\
\small  Private Bag X3, Wits 2050, Johannesburg, South Africa; and\\   
\small $^2$ Centro de Ciencias Matem\'aticas, UNAM,\\
\small Morelia, Mexico; {\tt Florian.Luca@wits.ac.za},\\
\small $^3$ Department of Mathematics, Indian Institute of Technology Madras,\\
\small Sardar Patel Road, Chennai TN 600036, INDIA; {\tt santanu@iitm.ac.in},\\
\small $^4$ Applied Mathematics Department, Naval Postgraduate School,\\
\small Monterey 93943, USA; {\tt pstanica@nps.edu}
}

\date{\today}

\pagenumbering{arabic}

\maketitle

\begin{abstract}
In 1953, Carlitz~\cite{Car53}  showed that  all permutation polynomials over $\F_q$, where $q>2$ is a power of a prime,  are generated by the special permutation polynomials $x^{q-2}$ (the inversion) and $ ax+b$ (affine functions, where $0\neq a, b\in  \F_q$). Recently, Nikova, Nikov and Rijmen~\cite{NNR19} proposed an algorithm (NNR) to find a decomposition of the inverse function in quadratics, and computationally covered all dimensions $n\leq 16$. Petrides~\cite{P23} found a class of integers for which it is easy to decompose the inverse into quadratics, and improved the NNR algorithm, thereby extending the computation up to $n\leq 32$. Here,  we extend Petrides' result, as well as we propose a number theoretical approach, which allows us to cover easily all (surely, odd) exponents up to~$250$, at least.
\end{abstract}
{\bf Keywords:}  Permutations, Decompositions, Quadratics, Algorithm, Primes, Sieves\newline
{\bf MSC 2020}: 11A41, 11N13, 11N36, 20B99, 94A60, 94D10

\section{Introduction}

\let\thefootnote\relax\footnotetext{This is an expanded and vastly improved version of the Extended Abstract, which was presented at the 8th International Workshop on Boolean Functions and their Applications (BFA) in Voss in September 2023.} 

In~\cite{Car53},
Carlitz showed that  all permutation polynomials over $\F_q$, where $q>2$ is a power of a prime,  are generated by the special permutation polynomials $x^{q-2}$ (the inversion) and $ ax+b$
(affine functions, where $0\neq a, b\in  \F_q$). The smallest number of inversions in such a decomposition is called the {\em Carlitz rank}.

Here, we ask whether the inverse in ${\mathbb F}_{2^n}$ (the finite field of dimension $n$ over the two-element prime field ${\mathbb F}_2$) can be written as a composition of quadratics, or quadratics and cubics. That is, we ask if there are integers $r\ge 1$ and $a_1\ge 0,\ldots,a_r\ge 0$ such that
$$
-1\equiv \prod_{i=1}^r (2^{a_i}+1)\pmod {2^n-1}.
$$  
Nikova, Nikov and Rijmen~\cite{NNR19} proposed an algorithm to find such a decomposition. Via Carlitz~\cite{Car53}, they were able to use the algorithm and show that for $n\leq 16$ any permutation can be decomposed in quadratic permutations, when $n$ is not multiple of $4$ and in cubic permutations, when $n$ is multiple of $4$. Petrides~\cite{P23}, in addition to a theoretical result, which we will discuss below, improved the complexity of the algorithm and presented a computational table of shortest decompositions for $n\leq 32$, allowing also cubic permutations in addition to quadratics. Here, we find a number theoretical approach which allows us to cover all (surely, odd) exponents up to~$250$.

\section{Our results}
Let $\nu_2$ be the 2-valuation, that is, the largest power of $2$ dividing the argument.
We start with a proposition,   extending one of Petrides' results~\cite{P23}, which states that if $n$ is an odd integer and $\frac{n-1}{2^{\nu_2(n-1)}}\equiv 2^k\pmod {2^n-1}$, for some $k$, then
\begin{align*}
2^n-2&=2\left(2^{\left(\frac{n-1}{2^{\nu_2(n-1)}}\right) 2^{\nu_2(n-1)}}-1\right)\\
&=2\left(2^{\frac{n-1}{2^{\nu_2(n-1)}}}-1\right)\prod_{j=1}^{\nu_2(n-1)}\left(2^{\frac{n-1}{2^j}}+1\right)\\
&\equiv 2\left(2^{2^k}-1\right)\prod_{j=1}^{\nu_2(n-1)}\left(2^{\frac{n-1}{2^j}}+1\right)\\
& = 2\prod_{j=0}^{k-1}\left(2^{2^j}+1\right)\prod_{j=1}^{\nu_2(n-1)}\left(2^{\frac{n-1}{2^j}}+1\right).
\end{align*}
This implies, via Carlitz~\cite{Car53}, that for all odd integers (coined {\em good integers}, with the counterparts named {\em bad integers} in~\cite{M97}) satisfying the  congruence $\frac{n-1}{2^{\nu_2(n-1)}}\equiv 2^k\pmod {2^n-1}$, one can decompose any permutation polynomial in $\F_{2^n}$  into affine and quadratic
power permutations. 

The smallest odd positive integer that is not {\em good} is $n=7$. We note however that in that case 
$$2^7-2=2(2^6-1)=2(2^2-1)(2^4+2^2+1)=2(2+1)(2^4+2^2+1),$$
and so, any permutation in ${\mathbb F}_{2^7}$ can be decomposed into affine, quadratic and cubic permutations. We are ready to generalize this observation.
\begin{theorem}
Let $n$ be an odd  integer satisfying 
$$
\frac{n-1}{2^{\nu_2(n-1)}}\equiv 2^k3^s\pmod {2^n-1},
$$
for some non-negative integers $r,s$. Then, the inverse power permutation in $\F_{2^n}$ has a decomposition into affine,  quadratic  and cubic power permutations of length~$k+s+\nu_2(n-1)$.
\end{theorem}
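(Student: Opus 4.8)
The plan is to mirror the computation that precedes the Proposition and then supply the one genuinely new ingredient: a factorization of $2^{2^k3^s}-1$ producing exactly $k$ quadratic and $s$ cubic power permutations. Write $\nu=\nu_2(n-1)$ and let $m=(n-1)/2^{\nu}$ be the odd part of $n-1$, so $n-1=m\,2^{\nu}$. Starting from the exact identity
$$2^n-2=2\left(2^{m2^{\nu}}-1\right)=2\left(2^{m}-1\right)\prod_{i=0}^{\nu-1}\left(2^{m2^{i}}+1\right),$$
the $\nu$ trailing factors $2^{m2^{i}}+1$ are each of the quadratic shape $2^{a}+1$. Since the multiplicative order of $2$ modulo $2^n-1$ equals $n$ and $m<n$, the hypothesis $m\equiv 2^k3^s$ lets us replace the remaining factor $2^{m}-1$ by $2^{2^k3^s}-1$ at the level of exponents modulo $2^n-1$; it therefore suffices to factor $2^{2^k3^s}-1$ into quadratic and cubic exponents.

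For that, I would iterate the two elementary identities $2^{2a}-1=(2^{a}-1)(2^{a}+1)$ and $2^{3a}-1=(2^{a}-1)(2^{2a}+2^{a}+1)$, the first contributing a quadratic exponent $2^{a}+1$ and the second a cubic exponent $2^{2a}+2^{a}+1$. Peeling off the $k$ powers of $2$ in the exponent and then the $s$ powers of $3$ telescopes to
$$2^{2^k3^s}-1=\prod_{j=0}^{k-1}\left(2^{2^{j}3^{s}}+1\right)\prod_{j=0}^{s-1}\left(2^{2\cdot3^{j}}+2^{3^{j}}+1\right),$$
the final tripling step terminating at $2^{3^{0}}-1=1$. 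This produces exactly $k$ quadratic and $s$ cubic factors.

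Combining the two displays gives, modulo $2^n-1$,
$$2^n-2\equiv 2\prod_{i=0}^{\nu-1}\left(2^{m2^{i}}+1\right)\prod_{j=0}^{k-1}\left(2^{2^{j}3^{s}}+1\right)\prod_{j=0}^{s-1}\left(2^{2\cdot3^{j}}+2^{3^{j}}+1\right),$$
a product of the unit $2$ (the Frobenius squaring, an $\F_2$-linear hence affine permutation) with $\nu+k$ quadratic and $s$ cubic power maps. Because $\gcd(2^{n-1}-1,2^n-1)=2^{\gcd(n-1,n)}-1=1$, every factor of $2^{n-1}-1$ is coprime to $2^n-1$, so each exponent above is invertible modulo $2^n-1$ and thus defines a genuine power permutation of $\F_{2^n}$. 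Reading the product of exponents as a composition of the corresponding (commuting) power maps and invoking Carlitz then exhibits $x\mapsto x^{2^n-2}=x^{-1}$ as a composition of affine, quadratic and cubic power permutations whose quadratic-plus-cubic length is $\nu+k+s=k+s+\nu_2(n-1)$, as claimed.

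The routine parts are the two telescoping identities and the degree bookkeeping, namely that a binary-weight-$2$ exponent yields a quadratic and that each cubic factor has the special weight-$3$ form $2^{2a}+2^{a}+1$. The step needing care is the reduction of $2^{m}-1$ to $2^{2^k3^s}-1$: one must verify that the substitution is valid for exponents modulo $2^n-1$, i.e. that $2^{m}\equiv 2^{2^k3^s}$, which is exactly where the congruence hypothesis (together with $m<n$) enters. This is precisely the point already settled in the Proposition, so the present argument extends it only by admitting the tripling identity alongside the doubling one.
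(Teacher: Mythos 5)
Your proof is correct and follows essentially the same route as the paper's: the same starting identity $2^n-2=2\left(2^{m}-1\right)\prod_{i=0}^{\nu-1}\left(2^{m2^{i}}+1\right)$, the same use of the congruence hypothesis to replace $2^{m}-1$ by $2^{2^k3^s}-1$ modulo $2^n-1$, and the same iteration of the difference-of-squares and difference-of-cubes identities to extract exactly $k$ quadratic and $s$ cubic factors. The only (immaterial) difference is the order of peeling: you remove the powers of $2$ from the exponent before the powers of $3$, so your explicit factors are $2^{2^{j}3^{s}}+1$ and $2^{2\cdot 3^{j}}+2^{3^{j}}+1$, whereas the paper removes the cubes first and gets $2^{2^{j}}+1$ and $2^{2^{k+1}3^{j}}+2^{2^{k}3^{j}}+1$; both give the claimed length $k+s+\nu_2(n-1)$.
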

\begin{proof}
As we have already alluded to above, using the difference of cubes factorization, $a^3-b^3=(a-b)(a^2+ab+b^2)$, we have
\allowdisplaybreaks
\begin{align*}
2^n-2&=2\left(2^{\frac{n-1}{2^{\nu_2(n-1)}}}-1\right)\prod_{j=1}^{\nu_2(n-1)}\left(2^{\frac{n-1}{2^j}}+1\right)\\
&\equiv 2 \left(2^{2^k3^s}-1\right)
\prod_{j=1}^{\nu_2(n-1)}\left(2^{\frac{n-1}{2^j}}+1\right)
\\
&= 2\left(2^{2^k 3^{s-1}}-1\right) \left(2^{2^{k+1}3^{s-1}}+2^{2^k3^{s-1}}+1\right) \prod_{j=1}^{\nu_2(n-1)}\left(2^{\frac{n-1}{2^j}}+1\right)
\\
&\cdots\cdots\cdots\cdots\\
&=2 \left(2^{2^k}-1\right)\prod_{j=0}^{s-1} \left(2^{2^{k+1}3^j}+2^{2^k 3^j}+1\right)
\prod_{j=1}^{\nu_2(n-1)}\left(2^{\frac{n-1}{2^j}}+1\right)
\\
& \equiv 2\prod_{j=0}^{k-1}\left(2^{2^j}+1\right) \prod_{j=0}^{s-1} \left(2^{2^{k+1}3^j}+2^{2^k 3^j}+1\right)\prod_{j=1}^{\nu_2(n-1)}\left(2^{\frac{n-1}{2^j}}+1\right).
\end{align*}
The claim is shown.
\end{proof}
\begin{example}
It is natural to investigate the counting function~$\cB(x)$ of {\em superbad integers} (that is,  integers~$n$ such that 
$\frac{n-1}{2^{\nu_2(n-1)}}\not\equiv 2^k3^s\pmod {2^n-1}$), with $\cB(x)=\{n\leq x\,:\, n\text{ is superbad}\}$, or its complement 
$$
\cA(x)=\{n\leq x\,:\, \frac{n-1}{2^{\nu_2(n-1)}}\equiv 2^k3^s\pmod {2^n-1}\}.
$$
\begin{sloppypar}
As an example, $|\cB(50)|=16$, more precisely, 
$$\cB(50)=\{1, 2, 3, 4, 5, 7, 9, 10, 13, 17, 19, 25, 28, 33, 37, 49\}$$
(Petrides~\textup{\cite{P23}} noted that $25$ integers up to $50$ are bad, so our extension surely prunes the integers  better). In a recent paper~\textup{\cite{LS23}}, it was shown that 
$$\cA(x)\ll \frac{x}{(\log\log x)^{1+o(1)}}.
$$
\end{sloppypar}
\end{example}

For most of the remaining work we restrict our attention to $n=p$, a prime. 

Let $p\ge 3$ be prime, $N:=N_p=2^p-1$. It is known that if $q\mid N_p$, then $q\equiv 1\pmod {p}$. 
We ask if we can say anything about the number of distinct prime factors $\omega(N_p)$ of $N_p$. We propose the following conjecture.

\begin{conj}
\label{conj:1}
There exists $p_0$ such that for $p>p_0$, $\omega(N_p)<1.36\log p$.
\end{conj}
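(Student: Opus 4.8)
The plan is to count, for a fixed prime $p$, the prime divisors $q$ of $N_p$ directly, exploiting the structural facts recalled just above the conjecture: each such $q$ satisfies $q\equiv 1\pmod{2p}$, and $q\mid N_p$ is equivalent to $\mathrm{ord}_q(2)=p$ (the order divides the prime $p$ and cannot be $1$). Modelling $2$ as a uniform element of $(\Z/q\Z)^\times$, the ``probability'' that $q\mid N_p$ is $\frac{\#\{x:\mathrm{ord}(x)=p\}}{q-1}=\frac{p-1}{q-1}$, so the expected size of $\omega(N_p)$ is
\[
\sum_{\substack{q\equiv 1\,(2p)\\ 2p<q\le 2^p}}\frac{p-1}{q-1}.
\]
By partial summation against the prime counting function in the progression $1\bmod 2p$ (where $\phi(2p)=p-1$), this is asymptotic to $\log\log(2^p)-\log\log(2p)\sim\log p-\log\log p$. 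This is the origin of the leading term $\log p$ and shows that the constant $1.36$ carries a genuine margin; the task is to turn this heuristic mean into a bound valid for every large $p$.

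First I would split the prime factors of $N_p$ by size. Fix a small parameter $c>0$ and set $Y:=2^{p/(c\log p)}$. Since the product of the distinct prime divisors of $N_p$ is at most $N_p<2^p$, the number of prime factors exceeding $Y$ is fewer than $\log(2^p)/\log Y=c\log p$; taking $c$ as small as we please makes this contribution negligible. Thus the whole conjecture reduces to the single assertion that the number of prime factors of $N_p$ in the window $(2p,Y]$ is at most $(1.36-c)\log p$ for all large $p$.

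To control these small prime factors I would compare $N_p$ with the random model above: for an integer whose admissible prime factors $q\equiv1\pmod{2p}$ are each included independently with probability $\frac{p-1}{q-1}$, the number of factors in $(2p,Y]$ has mean $\log\log Y-\log\log(2p)=\log p-\log\log p+O(\log\log p)$ together with Poisson-type concentration, so the event that it exceeds $(1.36-c)\log p$ has exponentially small probability. If this model could be justified for $N_p$ uniformly in $p$ — that is, if the number of $q\le Y$ with $\mathrm{ord}_q(2)=p$ obeyed its predicted asymptotic — then, combined with the first step, the stated inequality would follow with room to spare.

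The hard part will be precisely this justification, which lies beyond current reach. For a fixed $p$ the condition $\mathrm{ord}_q(2)=p$ is equivalent to $q\mid 2^p-1$ and hence singles out a finite set of primes governed by the factorisation of one integer; it is not the splitting condition of any fixed number field, so Chebotarev density and sieve methods — which count primes as a parameter varies — yield nothing about the factor count of an individual $N_p$ beyond the trivial $\omega(N_p)\ll p/\log p$. Converting the mean and variance of the model into a deviation bound valid for \emph{every} large $p$, with the precise constant $1.36$, would require large-deviation estimates uniform enough to survive a union bound over all primes $p\le P$, and this appears to demand genuinely new information about the multiplicative structure of the individual Mersenne numbers $2^p-1$.
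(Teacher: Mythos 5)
You should first be clear about the status of the target: it is a \emph{conjecture}, and the paper does not prove it --- it only gives heuristic evidence, namely the Hall--Tenenbaum sieve bound for integers all of whose prime factors are $\equiv \pm 1\pmod 8$, with exponent $Q_1(\delta)=(1+\delta)\log((1+\delta)/(0.5e))+1$, applied so that the ``failure probability'' for $N_p$ is $\ll p^{-1.0008}$, which is summable over $p$ and thus suggests (Borel--Cantelli style) at most finitely many exceptional primes. Your proposal is likewise only a heuristic, and you honestly concede that the key step is beyond reach; your unconditional ingredients (each $q\mid N_p$ has $\mathrm{ord}_q(2)=p$ and $q\equiv 1\pmod{2p}$; at most $c\log p$ prime factors exceed $Y=2^{p/(c\log p)}$; the model mean $\sum_{q\equiv 1\,(2p),\,q\le 2^p}\frac{p-1}{q-1}\sim \log p$) are all correct. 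So the comparison is between two heuristics, and yours takes a genuinely different route: a tailored random-order model for the divisors of the specific number $N_p$, versus the paper's treatment of $N_p$ as a random integer with prime factors restricted to $\pm 1\pmod 8$.

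The genuine gap is quantitative, and it matters for the specific constant $1.36$: your claim that exceeding $(1.36-c)\log p$ has ``exponentially small probability'' is false under your own model. A Poisson-type count with mean $\mu\sim\log p$ satisfies $P(X\ge(1+\delta)\mu)\approx\exp\bigl(-\mu\,((1+\delta)\log(1+\delta)-\delta)\bigr)$, and at $1+\delta=1.36$ the exponent is $1.36\log 1.36-0.36\approx 0.058$, so the failure probability is about $p^{-0.058}$ --- exponentially small in $\mu$ but only marginally small in $p$, and $\sum_p p^{-0.058}$ diverges. Hence even granting your model in full, no conclusion of the form ``for all $p>p_0$'' follows; indeed your model predicts infinitely many exceptional $p$ at the constant $1.36$, and would only support the conjecture with a constant exceeding $e\approx 2.718$, since $(1+\delta)\log((1+\delta)/e)+1>1$ requires $1+\delta>e$. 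The paper's constant is calibrated to something you never use: $q\mid N_p$ forces $\left(\frac{2}{q}\right)=1$, i.e.\ $q\equiv\pm 1\pmod 8$, which halves the density of admissible primes and replaces $e$ by $0.5e\approx 1.359$ in the sieve exponent; that is exactly where $1.36$ comes from and is the sole source of the convergence $\sum_p p^{-1.0008}<\infty$ that drives the paper's heuristic. (There is a real tension between the two models --- your conditional mean $\log p$ versus the paper's effective mean $\tfrac12\log\log N_p$ for mod-$8$-restricted integers --- but if you want your write-up to support the stated threshold $1.36$ rather than some constant near $e$, you must build the mod-$8$ constraint, or an equivalent density saving, into the probabilistic model, and your current argument does not.)
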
 

Similar heuristics regarding lower bounds for $\Omega(2^n-1)$ and $\omega(2^n-1)$ can be found in \cite{KoLa} and \cite{LuSt}. 

Our Conjecture \ref{conj:1}, is based on statistical arguments originating from sieve methods. The Tur\'an-Kubilius inequality asserts that 
$$
\sum_{n\le x} (\omega(n)-\log\log x)^2=O(x\log\log x).
$$
So, if $\delta>0$ is fixed, the set of $n\le x$ such that $\omega(n)\ge (1+\delta)\log\log x$ is of counting function $O_{\delta}(x/\log\log x)$. One can do better using sieves. Indeed, Exercise  04 in \cite{HT} shows that for fixed $\delta>0$ we in fact have that 
$$
\#\{n\le x: \omega(n)\ge (1+\delta)\log\log x\}\ll_{\delta} \frac{x}{(\log x)^{Q(\delta)}},
$$
where $Q(\delta):=(1+\delta)\log((1+\delta)/e)+1$. We would like to apply such heuristics to $N_p=2^p-1$. But note that if $q\mid N_p$, then $2^p\equiv 1\pmod q$. In particular, 
${\displaystyle{\left(\frac{2}{q}\right)=1}}$, so $q\equiv \pm 1\pmod 8$. But then the same proof as Exercise 04 in \cite{HT} shows that 
\begin{eqnarray*}
& \# & \{n\le x: q\mid n \Rightarrow q\equiv \pm 1\pmod 8   \quad {\text{\rm and}} \quad \omega(n)\ge (1+\delta)\log\log x\}\\
& \le & \frac{x}{(\log x)^{Q_1(\delta)+o(1)}}
\end{eqnarray*}
as $x\to\infty$, where $Q_1(\delta):=(1+\delta)\log((1+\delta)/(0.5 e))+1$. Taking $\delta=0.36$, we get $Q_1(\delta)=1.00086\ldots$. Thus, the probability that a number having only 
prime factors congruent to $\pm 1\pmod 8$ to have more than $1.36\log\log n$ distinct prime factors is
$$
O\left(\frac{1}{(\log n)^{1.00008}}\right). 
$$
Applying this to $N_p$, we get 
$$
O\left(\frac{1}{(\log(2^p-1))^{1.0008}}\right)\ll \frac{1}{p^{1.0008}},
$$
and since the series   
$$
\sum_{p\ge 3} \frac{1}{p^{1.0008}}
$$  
is convergent, we are led to believe that maybe there are at most finitely many prime numbers $p$ such that $\omega(N_p)\ge 1.36\log p$. It has been noted that perhaps infinitely often $\omega(N_p)\ge 2$.  For example, this is the case if $p\equiv 3\pmod 4$ is such that $q=2p+1$ is prime. Indeed, then $2$ is a quadratic residue modulo $q$ so $2^{(q-1)/2}\equiv 1\pmod q$, showing that $q\mid N_p$. Since $N_p$ is never a perfect power, in particular it cannot be a power of $q$, we get the desired conclusion that $\omega(N_p)\ge 2$.  

\begin{conj}
\label{conj:2}
There exists $p_0$ such that if $p>p_0$, then $N _p$ is squarefree.
\end{conj}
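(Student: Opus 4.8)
The plan is to reduce the squarefreeness of $N_p=2^p-1$ to a question about Wieferich primes and then to argue heuristically exactly as for Conjecture~\ref{conj:1}. First I would establish the following exact dichotomy, valid for every prime $p$ and every prime $q$: one has $q^2\mid 2^p-1$ if and only if $q\mid 2^p-1$ and $q$ is a \emph{Wieferich prime}, i.e.\ $2^{q-1}\equiv 1\pmod{q^2}$. Indeed, if $q\mid 2^p-1$ then the order $d:=\mathrm{ord}_q(2)$ divides the prime $p$ and exceeds $1$, so $d=p$; moreover $\mathrm{ord}_{q^2}(2)\in\{d,qd\}$, and since $q\nmid q-1$ the value $qd$ cannot divide $q-1$, so $\mathrm{ord}_{q^2}(2)=d$ holds precisely when $2^{q-1}\equiv 1\pmod{q^2}$. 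Hence $q^2\mid 2^p-1$ is equivalent to $\mathrm{ord}_{q^2}(2)=\mathrm{ord}_q(2)=p$, which is exactly the Wieferich condition together with $q\mid 2^p-1$.

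This reduction shows that $N_p$ is \emph{not} squarefree if and only if it has a Wieferich prime factor, equivalently if and only if there is a Wieferich prime $q$ with $\mathrm{ord}_q(2)=p$. Since the only known Wieferich primes, $1093$ and $3511$, have $\mathrm{ord}(2)$ equal to $364$ and $1755$ respectively, both composite, neither can divide $2^p-1$ to a square for any prime $p$, which is consistent with all computed Mersenne numbers being squarefree. In this language Conjecture~\ref{conj:2} becomes the assertion that only finitely many Wieferich primes $q$ have $\mathrm{ord}_q(2)$ prime.

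For the heuristic I would imitate the sieve-flavoured reasoning used for Conjecture~\ref{conj:1}. The standard model assigns to a prime $q$ the ``probability'' $1/q$ of being Wieferich, reflecting that $2^{q-1}\bmod q^2$ ranges over the $q$ lifts of $1$ modulo $q$ and equals $1$ for exactly one of them; this already predicts only $\sim\log\log x$ Wieferich primes up to $x$. Restricting further to those $q$ whose order $\mathrm{ord}_q(2)$ is prime costs an additional factor of roughly $1/\log q$, since the order is typically of size $q^{1-o(1)}$ and a number of that size is prime with density $\asymp 1/\log q$. The expected number of bad exponents is therefore governed by
$$
\sum_{q}\frac{1}{q\log q},
$$
which converges; assigning the contribution of each Wieferich prime to its single exponent $p=\mathrm{ord}_q(2)$ thus gives a finite total, so one is led to expect only finitely many primes $p$ with $N_p$ non-squarefree.

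The hard part, indeed the reason this must remain a conjecture, is that nothing in the above heuristic can presently be made rigorous: the distribution of Wieferich primes is essentially untouched, and it is not even known whether there are infinitely many of them, nor whether there are infinitely many that are \emph{not} Wieferich. Consequently $\#\{q\le x : q\text{ Wieferich}\}$ has no unconditional upper bound beyond the trivial one, and the convergence of $\sum_q 1/(q\log q)$ is a prediction of the probabilistic model rather than a theorem. Even the weaker statement that infinitely many Mersenne numbers $2^p-1$ are squarefree appears to be open, so a proof of Conjecture~\ref{conj:2} seems well beyond current technology, and the reduction to Wieferich primes is the most that can be offered rigorously.
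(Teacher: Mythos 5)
Since the statement is a conjecture, the paper does not prove it either; what it offers is heuristic evidence, and your proposal is of the same nature, correct in its rigorous parts, but it anchors the heuristic differently. Both arguments share the same core step: given that a prime $q$ divides $N_p$, the ``probability'' that $q^2\mid N_p$ is $1/q$ (your Wieferich dichotomy, which is correct and rigorously proved in your first paragraph, is exactly the structural reason behind this $1/q$, and it is a nice sharpening that the paper leaves implicit; your remark that $1093$ and $3511$ have composite orders $364$ and $1755$ is also right). The divergence is in how the resulting series is shown to converge. The paper bounds the failure probability by $\sum_{q:\, q\mid N_p\ \text{for some prime } p} 1/q$ and then invokes a theorem of Murata and Pomerance \cite{MuPo} that this sum is finite under GRH; thus the only heuristic input is the $1/q$ probability itself, and the sparseness of primes with prime order is a (conditional) theorem. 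You instead sum over \emph{all} primes $q$, inserting a further heuristic factor $1/\log q$ for the event that $\mathrm{ord}_q(2)$ is prime and assuming independence, which yields the unconditionally convergent $\sum_q 1/(q\log q)$. What each buys: your route is self-contained and avoids GRH, but it stacks a second layer of heuristics (the $1/\log q$ density and the independence assumption) precisely where the paper can point to a theorem; the paper's route is less elementary but places the decisive convergence on firmer footing. Your closing caveats about the state of knowledge on Wieferich primes are accurate and compatible with the paper's framing.
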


We offer some heuristic evidence for Conjecture \ref{conj:2}. 
Knowing that the prime $q$ divides $N_p$, the conditional probability that $N_p$ is divisible by $q^2$ is $1/q$. Thus, the probability that $N_p$ is not squarefree is bounded above by 
$$
\sum_{q: q\mid N_p~{\text{\rm for~some~prime}}~p} \frac{1}{q},
$$
and it was shown by Murata and Pomerance in \cite{MuPo} that the above sum is finite under GRH. 

So, assuming Conjecture \ref{conj:1} and \ref{conj:2}, let $N_p:=q_1\cdots q_k$ for some distinct primes $q_1,\ldots,q_k$ with $k\le 1.36\log p$. We take numbers of the form $2^a+1$ with an odd $a\in [5,p-2]$. We want to compute
$$
\left(\frac{2^a+1}{2^p-1}\right).
$$
This was done by Rotkiewicz in \cite{Ro}. Namely, write the Euclidean algorithm with even quotients and signed remainders:
\begin{eqnarray*}
p & = & (2k_1)a+\varepsilon_1 r_1,\quad \varepsilon_1\in \{\pm 1\},\quad 1\le r_1\le a-1\\
a & = & (2k_2) r_1+\varepsilon_2 r_2,\quad \varepsilon_2\in \{\pm 1\},\quad 1\le r_2\le r_1-1,\\
\ldots & = & \ldots\\
r_{\ell-2} & = & (2k_{\ell}) r_{\ell-1} +\varepsilon_{\ell} r_\ell,\quad \varepsilon_{\ell}\in \{\pm 1\},\quad r_{\ell}=1,
\end{eqnarray*}   
 where $\ell:=\ell(a,p)$ is minimal with $r_{\ell}=1$. Note that $r_i$ are all odd for $i=1,\ldots,\ell$. In particular, $r_j\ge 3$ for $j=1,\ldots,\ell-1$. Then
 $$
 \left(\frac{2^a+1}{2^p-1}\right)=\left(\frac{2^p-1}{2^a+1}\right)=\left(\frac{(2^a)^{2k_1}\cdot 2^{\varepsilon_1 r_1}-1}{2^a+1}\right)= \left(\frac{2^{\varepsilon_1 r_1}-1}{2^a+1}\right)=\left(\frac{2^{r_1}-1}{2^a+1}\right).
 $$
 The right--most equality is clear if $\varepsilon_1=1$, and if $\varepsilon_1=-1$, then 
 \begin{eqnarray*}
 \left(\frac{2^{-r_1}-1}{2^a+1}\right) & = & \left(\frac{2}{2^a+1}\right)^{r_1} \left(\frac{2^{-r_1}-1}{2^a+1}\right)=\left(\frac{2^{r_1}(2^{-r_1}-1)}{2^a+1}\right)\\
 & = & \left(\frac{1-2^{r_1}}{2^a+1}\right)=\left(\frac{-1}{2^a+1}\right)\left(\frac{2^{r_1}-1}{2^a+1}\right) = \left(\frac{2^{r_1}-1}{2^a+1}\right).
 \end{eqnarray*}
 The above calculation shows how to transit from the first step to the second step, or more generally from a step with $2^{r_j}-1$ in the bottom to a step with $2^{r_j}-1$ in the top. 
 For the next step, we write 
 $$
 \left(\frac{2^{r_1}-1}{2^a+1}\right)=\left(\frac{2^a+1}{2^{r_1}-1}\right)=\left(\frac{(2^{r_1})^{2k_2}\cdot 2^{\varepsilon_2 r_2}+1}{2^{r_1}-1}\right)=\left(\frac{2^{\varepsilon_2 r_2}+1}{2^{r_1}-1}\right)=\left(\frac{2^{r_2}+1}{2^{r_1}-1}\right).
 $$
 The last inequality above is clear if $\varepsilon_2=1$. If $\varepsilon_2=-1$, then since $r_1\ge 3$, we have 
 $$
 \left(\frac{2^{-r_2}+1}{2^{r_1}-1}\right)=\left(\frac{2}{2^{r_1}-1}\right)^{r_2}\left(\frac{2^{-r_2}+1}{2^{r_1}-1}\right)=\left(\frac{2^{r_2}(2^{-r_2}+1)}{2^{r_1}-1}\right)=\left(\frac{2^{r_2}+1}{2^{r_1}-1}\right).
 $$
 At step $\ell$ we end up with
 $$
 \left(\frac{2^{r_{\ell}}+(-1)^{\ell}}{2^{r_{\ell-1}}+(-1)^{\ell-1}}\right).
 $$
 If $\ell(a,p)$ is odd, we get
 $$
 \left(\frac{2^1-1}{2^{r_{\ell-1}}+1}\right)=1,
 $$
 and if $\ell(a,p)$ is even we get 
 $$
 \left(\frac{2^1+1}{2^{r_{\ell-1}}-1}\right)=-\left(\frac{2^{r_{\ell-1}}-1}{3}\right)=-\left(\frac{1}{3}\right)=-1.
 $$
 We thus get that 
 $$
 \left(\frac{2^a+1}{2^p-1}\right)=(-1)^{\ell+1}.
 $$
 We select the subset ${\mathcal A}(p)$ of odd $a$ in the interval $[5,p-2]$ such that $\ell\equiv 0\pmod 2$. We assume that there are a positive proportion of such, namely that there is a constant $c_1>0$ such that for large $p$, there are $>c_1p$ odd numbers  $a\in [5,p-2]$ such that $\ell(a,p)\equiv 0\pmod 2$.  So, we have 
 $$
\prod_{i=1}^k \left(\frac{2^a+1}{q_i}\right)=-1\quad {\text{\rm for}}\quad a\in {\mathcal  A}(p).
$$
We next conjecture that for such $a$, the values are 
\begin{equation}
\label{eq:2}
\left(\left(\frac{2^a+1}{q_i}\right),1\le i\le k\right)
\end{equation}
are uniformly distributed among the $2^k$ vectors ${\underbrace{(\pm 1,\pm 1,\cdots,\pm 1)}_{k~{\text{\rm times}}}}$. Indeed, if not, then somehow for some $a$ the value of $2^a+1$ of the Legendre symbol ${\displaystyle{\left(\frac{2^a+1}{p_i}\right)}}$ should be determined in  terms of the values of the same symbol for $b\le a-1$. This can happen for example if:  
\begin{itemize}
\item[(i)] $2^a+1$ is a square. This never happens for $a\ge 4$.
\item[(ii)] $2^a+1$ is multiplicatively dependent over $\{2^b+1: 0\le b\le a-1\}$. This does not happen for $a\ge 4$ because of the Carmichael's Primitive Divisor Theorem: $2^a+1$ has a prime factor $p_a$ which is primitive in the sense that $p_a$ does not divide $2^b+1$ for any $b\le a-1$.
\end{itemize}
Well, so we fix $i\in \{1,\ldots,k\}$ and search for $a_i$ such that
\begin{equation}
\label{eq:3}
\left(\frac{2^{a_i}+1}{q_i}\right)=(-1)^{\delta_{ij}},
\end{equation}
where $\delta_{ij}$ is the Kronecker symbol. That is, $2^{a_i}+1$ is a quadratic residue modulo $p_j$ for all $j\ne i$ but it is not a quadratic residue modulo $q_i$. 
Do we expect to find it? Well, let us see. Fix $i$ in $\{1,\ldots,k\}$. The probability that $2^{a_i}+1$ verifies the Legendre conditions given by \eqref{eq:3} is $1/2^k$ so it is $(1-1/2^k)$ that they are not satisfied. 
Note that since ${\displaystyle{\left(\frac{2^{a_i}+1}{N_p}\right)=-1}}$ we know that an odd number of the $p=p_j$'s satisfy that ${\displaystyle{\left(\frac{2^{a_i}+1}{p_j}\right)=1}}$.
So, if we assume that this is so for all possible $a_i$'s, and assuming that there events are independent, we get that the probability that this be so is
$$
\ll \left(1-\frac{1}{2^{k}}\right)^{c_1 p}<\left(1-\frac{1}{p^{1.36\log 2}}\right)^{c_1p}<\left(1-\frac{1}{p^{0.95}}\right)^{c_1 p}\ll \frac{1}{e^{c_1 p^{0.05}}}.
$$
In the above, we used that $k<1.36\log p$ and $1.36\cdot \log 2<0.95$. Of course, this is for $i$ fixed and now we sum up over $i$ from $1$ to $k$ introducing another logarithmic factor in the above count. Since the series 
$$
\sum_{p} \frac{\log p}{e^{c_1 p^{0.05}}}
$$
converges, so we expect that the above event does not occur when $p>p_0$. So, we have the following conjecture.

\begin{conj}
Assume Conjectures \ref{conj:1} and \ref{conj:2}. Write $2^p-1=q_1\ldots q_k$ for $p>p_0$ with prime factors $q_1<\cdots<q_k$ and $k<1.36\log p$.  Then for each $i=1,\ldots,k$, there exists an odd $a_i\in [5,p-2]$ such that equalities 
\eqref{eq:3} hold. 
\end{conj}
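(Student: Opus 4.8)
Since the assertion is itself conjectural, by a ``proof'' I mean an attempt to upgrade the independence heuristic preceding it into a rigorous counting statement, together with a clear identification of the step that currently blocks this. Fix $i\in\{1,\dots,k\}$ and let $\epsilon=(\epsilon_1,\dots,\epsilon_k)$ be the target pattern with $\epsilon_i=-1$ and $\epsilon_j=+1$ for $j\ne i$; this vector has an odd number of entries equal to $-1$, so it is consistent with the global constraint $\left(\frac{2^a+1}{N_p}\right)=(-1)^{\ell+1}=-1$ enforced for those $a$ with $\ell(a,p)$ even. The plan is to prove that the count
$$
C_i=\sum_{a\in[5,p-2],\ a\ \text{odd}}\ \prod_{j=1}^{k}\frac12\left(1+\epsilon_j\left(\frac{2^a+1}{q_j}\right)\right)
$$
is positive. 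Expanding the product indexes the terms by subsets $S\subseteq\{1,\dots,k\}$: the empty set yields the main term $\frac{1}{2^k}\cdot\#\{\text{admissible }a\}\asymp p/2^{k}$, which by Conjecture~\ref{conj:1} (giving $2^k<p^{1.36\log 2}<p^{0.95}$) is $\gg p^{0.05}$, while each nonempty $S$ contributes an error proportional to the character sum $\sum_a\chi_S(2^a+1)$, where $\chi_S:=\prod_{j\in S}\left(\frac{\cdot}{q_j}\right)$. By Conjecture~\ref{conj:2} the modulus $M_S:=\prod_{j\in S}q_j$ is squarefree, so $\chi_S$ is a primitive real character modulo $M_S$, and the whole problem reduces to showing $\sum_a\chi_S(2^a+1)=o(p/2^k)$ uniformly over the $2^k-1<p^{1.36\log 2}$ nonempty subsets~$S$.

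Next I would exploit the rigidity of the exponential. Since $q_j\mid N_p$ forces $\mathrm{ord}_{q_j}(2)=p$ for every $j$ (the order divides the prime $p$ and cannot be $1$), by the Chinese Remainder Theorem the residues $2^a\bmod M_S$ traverse, over one full period, a cyclic multiplicative subgroup $H\le(\Z/M_S\Z)^*$ of order exactly $p$. The restriction to odd $a$ costs only a clean factor of $\tfrac12$ (the additive twist $(-1)^a$ has period $2$, coprime to the period $p$, and averages to $0$ over a double period), and the truncation of $[0,p-1]$ to $[5,p-2]$ is handled by the usual completion/partial-summation device. After these routine reductions every error term is controlled by the \emph{incomplete subgroup sum}
$$
\sum_{t\in H}\chi_S(t+1).
$$

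The hard part --- and the reason the statement must remain a conjecture --- is exactly the estimation of $\sum_{t\in H}\chi_S(t+1)$. For the subsets $S$ with $M_S$ close to $N_p=2^p-1$ the subgroup $H$ has order $p\approx\log_2 M_S$, sitting far below any fixed power $M_S^{\delta}$; in this extreme range the classical machinery gives nothing useful: the Weil/Gauss-sum and the P\'olya--Vinogradov and Burgess bounds are vacuous, and the subgroup estimates of Konyagin--Shparlinski and Bourgain--Glibichuk--Konyagin all require $|H|>M_S^{\delta}$, which is violated by an exponential margin. Thus I see no way to establish the required cancellation $\sum_{t\in H}\chi_S(t+1)=o(p)$ unconditionally; this gap is precisely the difference between the independence heuristic and a theorem. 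A natural fallback would be to replace the pointwise bound by an $\ell^2$ (large-sieve) average over the $2^k-1$ characters $\chi_S$, which would show that $C_i>0$ for all but a negligible fraction of target patterns $\epsilon$; but because \eqref{eq:3} demands one specific pattern for each $i$, such an average does not by itself settle the conjecture, leaving the small-subgroup character sum as the essential obstacle.
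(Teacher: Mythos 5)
You are right to read ``proof'' loosely here: the statement is a conjecture, and the paper itself does not prove it either --- its own justification is the probabilistic heuristic that precedes the conjecture's statement. Your route, however, is genuinely different from the paper's. The paper argues as follows: restrict to the set of odd $a\in[5,p-2]$ with $\ell(a,p)$ even (assuming there are $>c_1p$ of them), conjecture that for these $a$ the sign vectors $\bigl(\bigl(\frac{2^a+1}{q_i}\bigr)\bigr)_{1\le i\le k}$ equidistribute over the $2^k$ patterns (supported by the non-squareness of $2^a+1$ for $a\ge 4$ and by multiplicative independence via Carmichael's Primitive Divisor Theorem), treat distinct $a$ as independent trials, and conclude that for fixed $i$ the probability that no admissible $a_i$ exists is $\ll(1-2^{-k})^{c_1p}\ll e^{-c_1p^{0.05}}$; summing over $i$ (a factor $k\ll\log p$) and then over $p$ gives a convergent series, a Borel--Cantelli-style justification of the ``finitely many exceptional $p$'' form of the conjecture. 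Your character-sum framing replaces the equidistribution and independence assumptions by a single analytic target, and it even dispenses with the positive-proportion hypothesis on $\ell(a,p)$: any odd $a$ realizing your pattern $\epsilon$ automatically has Jacobi symbol $-1$, hence $\ell(a,p)$ even, by the Rotkiewicz computation in the paper, so nothing about $\mathcal{A}(p)$ need be assumed. What your framing buys is a precise statement of exactly what is missing, namely cancellation in $\sum_{t\in H}\chi_S(t+1)$ where $H=\langle 2\rangle$ has order exactly $p\approx\log_2 M_S$ (your order computation is correct: the order of $2$ modulo each $q_j$ divides the prime $p$ and is not $1$), exponentially below the $M_S^{\delta}$ range where Burgess or Bourgain--Glibichuk--Konyagin apply; what the paper's heuristic buys instead is a quantitative prediction of how rare failures should be, which a main-term/error-term decomposition cannot provide without the (currently hopeless) error bound. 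One minor bookkeeping point: the parity restriction on $a$ is not quite a ``clean factor of $\tfrac12$,'' since the interval $[5,p-2]$ is shorter than one period $2p$ of the pair $(a\bmod 2,\,a\bmod p)$; the parity and truncation conditions should be handled together by completion modulo $p$, at the cost of additive twists inside the subgroup sums --- this changes nothing in your conclusion, as the twisted sums are no more tractable than the plain ones.
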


The rest of the proof is unconditional. We will show that there exist integers $x_i$ such that 
\begin{equation}
\label{eq:10}
(-1)=\prod_{i=1}^k (2^{a_i}+1)^{x_i}\pmod {2^p-1}.
\end{equation}
Write $q_i-1=:2^{\alpha_i} R_i$ for $i=1,\ldots,k$, where $R_i$ is odd. Let 
$$
R:={\text{\rm lcm}}[R_i:1\le i\le k]
$$
and write $x_i=y_ iR$ for $i=1,\ldots,k$. Let $\rho_i$ be a primitive root modulo $q_i$. Write
\begin{equation}
\label{eq:100}
2^{a_i}+1=\rho_j^{b_{ij}}\pmod {q_j}.
\end{equation}
Conditions \eqref{eq:3} show that $b_{ij}\equiv \delta_{ij}\pmod 2$. Equation \eqref{eq:10} holds if and only if it holds one prime $q_j$ at a time. Thus, we want
$$
\rho_j^{(q_j-1)/2}\equiv \rho_j^{R\sum_{i=1}^k y_i b_{ij}}\pmod {q_j},
$$
which holds provided that 
$$
\frac{(q_j-1)}{2}\equiv R\sum_{i=1}^k y_i b_{ij}\pmod {q_j-1}.
$$
This in turn is equivalent to 
$$
2^{\alpha_j-1} \equiv (R/R_j) \sum_{i=1}^k y_i b_{ij}\pmod {2^{\alpha_j}}.
$$
Since $R/R_j$ is odd, it follows that it is invertible modulo $2^{\alpha_j}$. Writing $(R/R_j)^*$ for the inverse of $(R/R_j)$ modulo $2^{\alpha_j}$, we get that 
$$
2^{\alpha_j-1} (R/R_j)^*\equiv \sum_{i=1}^k y_i b_{ij}\pmod {2^{\alpha_j}}.
$$
Since $(R/R_j)^*$ is odd the left--hand side is just $2^{\alpha_j-1}\pmod {2^{\alpha_j}}$. Thus, 
$$
2^{\alpha_j-1} \equiv \sum_{i=1}^k y_i b_{ij}\pmod {2^{\alpha_j}}.
$$
This is a linear system of modular equations for $i=1,\ldots,k$. To see that it is nondegenerate note that the coefficient matrix ${\mathcal B}=(b_{ij})_{1\le i,j\le k}$ modulo $2$ is in fact the identity matrix. Hence, its determinant is odd integer, so invertible modulo powers of $2$, which shows that there exist an integer solution $y_1,\ldots,y_k$. To solve it, we can generate for each $i,j$ the number $b_{i,j}\pmod {2^{\alpha_j}}$ appearing in \eqref{eq:100} as an integer in the interval $[0,2^{\alpha_j}-1]$. Having done that, we solve the linear system 
$$
\sum_{i=1}^k y_i b_{ij}=2^{\alpha_j-1}\quad {\text{\rm for}}\quad j=1,2,\ldots,k.
$$
This is non-degenerate since the determinant of the coefficient matrix is odd. Thus, $(y_1,\ldots,y_k)$ are some rational numbers. Now we treat them as residue classes modulo $2^{\alpha}$, where $\alpha:=\max\{\alpha_i: 1\le i\le k\}$
(by inverting the odd determinant modulo $2^{\alpha}$). 
These ones are the $y_i$'s that we are looking for. 

We  implemented this and checked it for all primes $p\le 250$. We present our approach in Algorithm~\ref{algo1}. 

 \begin{algorithm}[!htb]
	\caption{} 
        \label{algo1}
	\DontPrintSemicolon
        
        \For{Prime $p \leq 250$}
         { 
         
        Factor $2^p-1=q_1\cdots  q_k$, where $q_i$ is prime for $1 \leq i \leq k$;
        
         \For{$j=1$ to $k$}{
                    Find odd $a_j \in [5, p-2]$ such that the
                    Legendre symbol $\left(\frac{2^{a_j}+1}{q_i}\right)=(-1)^{\delta_{ij}}$ where $\delta_{ij}$ is the Kronecker symbol.

         }

         Take  a primitive root $\rho_i$ modulo $q_i$ for $1 \leq i \leq k$;

         Find $b_{ij}$ such that $2^{a_i}+1=\rho_j^{b_{ij}}\pmod {q_j}$ for $1 \leq i,j \leq k$;

         Find largest $\alpha_i$ such that  $2^{\alpha_i}$ is a divisior of $q_i-1$ for  $1 \leq i \leq k$;

         Calculate $\alpha=\max\{\alpha_i: 1\le i\le k\}$;

         Solve the system of linear equations  
$
\sum_{i=1}^k y_i b_{ij}=2^{\alpha_j-1}\quad {\text{\rm for}}\quad j=1,2,\ldots,k.
$ in $\mathbb{Z}_{\alpha}$

         }
            
	
\end{algorithm} 

The factorization of $2^p-1$ is known for all primes $p<1000$. Surely, we can use the same algorithm modulo $2^n-1$ 
for $n\le 250$ and odd. Note that if 
$$
2^n-1=\prod_{j=1}^k q_j^{\alpha_j},
$$
then we only want to find a relation of the form
\begin{equation}
\label{eq:M}
(-1)\equiv \prod_{i=1}^k (2^{a_i}+1)^{x_i}\pmod {q_1\ldots q_k}.
\end{equation}
Indeed, if we have found the above relation, then 
$$
q_1\ldots q_k\mid (2^{a_1}+1)^{x_1}\cdots (2^{a_k}+1)^{x_k}+1.
$$
Writing $Q:=(2^n-1)/(q_1\ldots q_k)$, we then get easily that 
$$
(2^n-1)\mid (2^{a_1}+1)^{x_1 Q}   (2^{a_2}+1)^{x_2 Q}\cdots (2^{a_k}+1)^{x_k Q}+1.
$$
Thus, 
$$
(-1)\equiv \prod_{i=1}^k (2^{a_i}+1)^{x_i Q}\pmod {2^n-1}.
$$
Thus, we factor $2^n-1$, take $q_1,\ldots,q_k$ to be all its distinct prime factors and attempt to find some numbers $a_1,\ldots,a_k$ in $[5,n-2]$ such that the congruences~\eqref{eq:3} are satisfied. If we are successful, 
then the argument based on the matrix with odd determinant will work to find a solution of \eqref{eq:M}, which in turn can be easily lifted to a solution modulo $2^n-1$.  
The factorizations of $2^n-2$  with weight 2 factors for odd $33 \leq n \leq 249$ are given in Table~\ref{tab:1}. 

\begin{remark}
We have checked that Algorithm~\textup{\ref{algo1}} works for most primes up to $250$. But there are a few primes like $47$ for which there is no $a_j \in [5, p-2]$ such that $\left(\frac{2^{a_j}+1}{q_i}\right)=(-1)^{\delta_{ij}}$. In these cases, we use the following trick. 
We first find $a_i$ and calculate  $(\frac{2^{a_j}+1}{q_i})=(-1)^{d_{i,j}}$.  
Ideally, $d_{i,j}$ should be Kronecker symbols, but if they are not, we can just record what they are. Because $d_{i,j}$ are no longer Kronecker symbols, we cannot be certain that the system is solvable because it may have an even determinant and we cannot invert the matrix modulo powers of $2$. However, we checked primes (and odd integers) up to $250$. We observed that in the case of failure, we can use this trick and always get suitable $a_i$'s such that the corresponding matrix has odd determinant, and is therefore invertible.
\end{remark}

 \setstretch{1.5}
{\tiny
\setlength\tabcolsep{1.5pt}
\begin{longtable}{|c|l|}
\caption{Factorization of $2^n-2 \pmod {2^n-1}$ for odd $33 \leq n \leq 249$.} \label{tab:1} \\
\hline
$n=33$ & $(2^5+1)^{599478} \cdot (2^{13}+1)^{299739} \cdot (2^{29}+1)^{1798434}$ \\
\hline 
 $n=35$ &  $\left((2+1)(2^{17}+1)\right)^{967995}\cdot  (2^{29}+1)^{276570}$ \\
 \hline
$n=37$  & $(2^5+1)^{77039772} \cdot (2^{13}+1)^{19259943}$\\ 
 \hline 
 $n=39$  & $\left((2^{11}+1) (2^{21}+1)\right)^{1592955}$\\ 
 \hline 
 $n=41$  & $(2^{9}+1)^{20111512782} \cdot (2^{13}+1)^{3351918797}$\\ 
 \hline 
$n=43$  & $\left((2^{5}+1) (2^{17}+1) (2^{23}+1)\right)^{593211015}$\\ 
 \hline 
 $n=45$  & $(2+1)^{407925} \cdot (2^{13}+1)^{349650} \cdot \left((2^{25}+1) (2^{33}+1) (2^{41} +1)\right)^{116550}$\\ 
 \hline 
 $n=47$ & $(2^{11}+1)^{1927501725} \cdot (2^{37}+1)^{435242325} \cdot (2^{41}+1)^{ 1616614350}$ \\
 \hline 
 $n=49$ & $ (2^9+1)^{34630287489} \cdot (2^{11}+1)^{3393768173922}$ \\
 \hline 
  $n=51$ & $ (1+2^{ 29 })^{ 150009615 }$ \\
 \hline 
 $n=53$ & $(1+2^{ 5 })^{ 6512186850 } \cdot (1+2^{ 15 })^{ 3506562150 } \cdot (1+2^{ 21 })^{ 250468725 }$ \\
 \hline 
 \multirow{ 2}{*}{$n=55$}  & $(1+2)^{ 6588945 } \cdot (1+2^{ 11 })^{ 5856840 } \cdot (1+2^{ 17 })^{ 732105 }\cdot$\\ 
  & $ (1+2^{ 25 })^{ 1464210 } \cdot (1+2^{ 33 })^{ 10249470 } \cdot (1+2^{ 47 })^{ 732105 }$ \\
 \hline 
 \multirow{ 1}{*}{$n=57$}   & $(1+2^{ 5 })^{ 396029391534 } \cdot (1+2^{ 17 })^{ 1188088174602 } \cdot (1+2^{ 21 })^{ 594044087301 }\cdot (1+2^{ 47 })^{ 198014695767 }$\\
 \hline 
 $n=59$   & $(1+2^{ 7 })^{ 3663925098759300 } \cdot (1+2^{ 13 })^{ 305327091563275 }$ \\
 \hline 
 $n=61$   & $(1+2^{ 9 })^{ 1152921504606846975 }$\\
 \hline 
   \multirow{ 1}{*}{$n=63$ }  & $(1+2)^{ 42958503 } \cdot (1+2^{ 5 })^{ 3735522 } \cdot (1+2^{ 39 })^{ 56032830 }\cdot (1+2^{ 43 })^{ 44826264 } \cdot (1+2^{ 47 })^{ 29884176 }$ \\
   \hline 
   $n=65$ & $(1+2^{ 17 })^{ 72647571779055 } \cdot (1+2^{ 23 })^{ 72647571779055 } \cdot (1+2^{ 29 })^{ 72647571779055 }$\\
   \hline 
   $n=67$   & $(1+2^{ 5 })^{ 15295807610659665 }$\\
   \hline 
   \multirow{ 1}{*}{$n=69$}   & $(1+2^{ 11 })^{ 36566619637113225 } \cdot (1+2^{ 17 })^{ 2437774642474215 } \cdot (1+2^{ 53 })^{ 19502197139793720 } \cdot (1+2^{ 67 })^{ 21939971782267935 }$\\
    \hline 
    $n=71$   & $(1+2^{ 11 })^{ 3659326099961865 } \cdot (1+2^{ 13 })^{ 14637304399847460 }$\\
    \hline 
    $n=73$   & $(1+2^{ 31 })^{ 1726845200475585 } \cdot (1+2^{ 45 })^{ 107064402429486270 }$ \\
    \hline 
    \multirow{ 2}{*}{$n=75$} & $ (1+2)^{ 36654975 } \cdot (1+2^{ 39 })^{ 17832150 } \cdot (1+2^{ 41 })^{ 9906750 } \cdot $\\ 
    & $(1+2^{ 43 })^{ 7925400 } \cdot (1+2^{ 53 })^{ 57459150 } \cdot (1+2^{ 55 })^{ 15850800 } \cdot (1+2^{ 63 })^{ 43589700 }$ \\
    \hline 
    \multirow{ 2}{*}{$n=77$ }  & $(1+2^{ 25 })^{ 290641821624556479 } \cdot (1+2^{ 31 })^{ 290641821624556479 } \cdot$ \\
    & $(1+2^{ 41 })^{ 290641821624556479 } \cdot (1+2^{ 67 })^{ 581283643249112958 }$ \\
    \hline 
 $ n=79$   & $(1+2^{ 9 })^{ 12102186118644337359 } \cdot (1+2^{ 15 })^{ 12102186118644337359 } \cdot (1+2^{ 41 })^{ 12102186118644337359 }$ \\
             \hline
\multirow{ 2}{*}{$n=81$}   & $(1+2)^{ 106331083505919 } \cdot (1+2^{ 25 })^{ 155626336778778 } \cdot (1+2^{ 37 })^{ 105108887143782 } \cdot$\\
   & $ (1+2^{ 39 })^{ 155626336778778 } \cdot (1+2^{ 43 })^{ 4073987873790 } $ \\
\hline 
$n=83$   & $(1+2^{ 11 })^{ 7239076764159456135965 }$ \\
\hline 
\multirow{ 1}{*}{$n=85$}   & $(1+2^{ 9 })^{ 4760486403166879215 } \cdot (1+2^{ 13 })^{ 4760486403166879215 } \cdot  (1+2^{ 23 })^{ 4760486403166879215 }$ \\
\hline 
\multirow{ 2}{*}{$n=87$}   & $(1+2^{ 39 })^{ 3371346107168004 } \cdot (1+2^{ 41 })^{ 280945508930667 } \cdot (1+2^{ 53 })^{ 2809455089306670 } \cdot$\\
& $ (1+2^{ 61 })^{ 4214182633960005 } \cdot (1+2^{ 71 })^{ 1685673053584002 } \cdot (1+2^{ 83 })^{ 280945508930667 }$ \\
\hline 
$n=89$   & $(1+2^{ 13 })^{ 309485009821345068724781055 }$ \\
\hline 
\multirow{ 2}{*}{$n=91$}   & $(1+2^{ 59 })^{ 280368506850705 } \cdot (1+2^{ 67 })^{ 1682211041104230 } \cdot$\\
& $ (1+2^{ 71 })^{ 280368506850705 } \cdot  (1+2^{ 73 })^{ 280368506850705 } \cdot (1+2^{ 81 })^{ 3364422082208460 }$ \\
\hline 
$n=93$   & $(1+2^{ 17 })^{ 2305843010287435773 }$ \\
\hline 
$n=95$   & $(1+2^{ 43 })^{ 7354378117756963125 } \cdot (1+2^{ 51 })^{ 7354378117756963125 }$ \\
\hline 
$n=97$   & $(1+2^{ 5 })^{ 612535370185410489825162846 } \cdot (1+2^{ 9 })^{ 102089228364235081637527141 }$\\
\hline
\multirow{ 2}{*}{$n=99$}   & $(1+2)^{ 160190876329840719 } \cdot (1+2^{ 23 })^{ 160190876329840719 } \cdot (1+2^{ 35 })^{ 58251227756305716 }\cdot$\\ 
& $  (1+2^{ 57 })^{ 29125613878152858 } \cdot (1+2^{ 59 })^{ 101939648573535003 } \cdot  (1+2^{ 75 })^{ 58251227756305716 }$ \\
\hline 
$n= 101 $&$(1+2^{7})^{261479084205457681314981849}\cdot(1+2^{9})^{1045916336821830725259927396}$\\
\hline
$n= 103 $ & $(1+2^{5})^{8204858250687037849538541156}\cdot(1+2^{9})^{2051214562671759462384635289}$ \\
\hline
\multirow{ 3}{*}{$n= 105 $} & $(1+2^{7})^{736412106675}\cdot(1+2^{29})^{6627708960075}\cdot(1+2^{37})^{1472824213350}\cdot$ \\
           & $(1+2^{55})^{6627708960075}\cdot(1+2^{69})^{15464654240175}\cdot(1+2^{79})^{736412106675}\cdot$ \\
           & $(1+2^{83})^{4418472640050}\cdot(1+2^{85})^{441847264005}\cdot(1+2^{87})^{13255417920150}$ \\
\hline
$n= 107 $ & $(1+2^{5})^{27043212804868893898596335048021}$ \\
\hline
\multirow{ 1}{*}{$n= 109 $} & $(1+2^{7})^{744308608310570490215126499806}\cdot (1+2^{15})^{372154304155285245107563249903}$ \\
\hline
\multirow{ 2}{*}{$n= 111 $} & $(1+2^{17})^{2078233794395472907116}\cdot(1+2^{31})^{742226355141240323970}\cdot (1+2^{39})^{890671626169488388764}$ \\
           & $ (1+2^{71})^{180254971962872650107}\cdot (1+2^{87})^{519558448598868226779}$ \\
\hline
\multirow{ 3}{*}{$n= 113 $} & $(1+2^{15})^{82901226266607482846190}\cdot(1+2^{25})^{13816871044434580474365}\cdot$ \\
           & $(1+2^{29})^{37854441217628987601}\cdot(1+2^{75})^{13816871044434580474365}\cdot$ \\
           & $(1+2^{97})^{82901226266607482846190}$ \\
\hline
\multirow{ 3}{*}{$n= 115 $}&$(1+2^{17})^{23588654041464621525}\cdot(1+2^{23})^{165120578290252350675}\cdot$\\ &$ (1+2^{39})^{23588654041464621525}\cdot(1+2^{45})^{23588654041464621525}\cdot$\\ &$ (1+2^{75})^{188709232331716972200}$\\
\hline
\multirow{ 3}{*}{$n= 117 $} & $(1+2^{5})^{350280341971560}\cdot(1+2^{11})^{481635470210895}\cdot(1+2^{31})^{1225981196900460}\cdot$ \\
           & $(1+2^{55})^{1269766239646905}\cdot(1+2^{71})^{1225981196900460}\cdot(1+2^{87})^{744345726689565}\cdot$ \\
           & $(1+2^{93})^{1903697510715}\cdot(1+2^{111})^{1094626068661125}\cdot(1+2^{115})^{1182196154154015}$ \\
\hline
\multirow{ 3}{*}{$n= 119 $ }& $(1+2^{21})^{121807344007626864485535}\cdot(1+2^{25})^{28109387078683122573585}\cdot$ \\
           & $(1+2^{51})^{6635419517925198843570}\cdot(1+2^{81})^{5968559856373716359791215}\cdot$ \\
           & $(1+2^{97})^{852651408053388051398745}\cdot(1+2^{109})^{6635419517925198843570}$ \\
\hline
\newpage
\hline
\multirow{ 2}{*}{$n= 121 $} & $(1+2^{9})^{99244104353509123769903900571}\cdot(1+2^{19})^{893196939181582113929135105139}\cdot$ \\
           & $(1+2^{25})^{893196939181582113929135105139}\cdot(1+2^{43})^{1786393878363164227858270210278}$ \\
\hline
\multirow{ 3}{*}{$n= 123 $} & $(1+2^{5})^{38263506571610465341512132024}\cdot(1+2^{9})^{19131753285805232670756066012}\cdot$ \\
& $(1+2^{27})^{4782938321451308167689016503}\cdot (1+2^{53})^{28697629928707849006134099018}\cdot$\\
&$ (1+2^{113})^{7726284980805959347805334351}$ \\
\hline
\multirow{ 3}{*}{$n= 125 $} & $(1+2^{23})^{2898591397871459238374625}\cdot(1+2^{29})^{644131421749213164083250}\cdot$\\
& $(1+2^{95})^{1610328554373032910208125}$  $(1+2^{109})^{4186854241369885566541125}\cdot$\\
& $(1+2^{121})^{1932394265247639492249750}$ \\
\hline
$n= 127 $ & $(1+2^{5})^{28356863910078205288614550619314017621}$ \\
\hline
$n= 129 $ & $(1+2^{9})^{8471295533565243108183419405055}\cdot(1+2^{79})^{9529016348217371325290685495}$ \\
\hline
\multirow{ 1}{*}{$n= 131 $} & $(1+2^{9})^{1293849303881895298339827404683529321}\cdot  (1+2^{15})^{2587698607763790596679654809367058642}$ \\
\hline
\multirow{ 2}{*}{$n= 133 $} & $(1+2^{5})^{27256203475454233141905720953493}\cdot (1+2^{17})^{81768610426362699425717162860479}$ \\
& $(1+2^{45})^{81768610426362699425717162860479}$ \\
\hline
 
\multirow{ 4}{*}{$n= 135 $} & $(1+2^{9})^{1390256215369200900}\cdot(1+2^{25})^{9826330930229511961200}\cdot(1+2^{47})^{38551429107264868200}$ \\
& $(1+2^{55})^{2813183451799578021150}\cdot(1+2^{89})^{7686726614776311776100}$ \\
& $(1+2^{101})^{1406591725899789010575}\cdot(1+2^{109})^{4576375896941567062575}$ \\
& $(1+2^{117})^{1042692161526900675}\cdot(1+2^{121})^{2119793164384189072275}$ \\
\hline
\multirow{ 1}{*}{$n= 137 $}&$(1+2^{5})^{39741006355730039527321333167397040041}\cdot (1+2^{7})^{79482012711460079054642666334794080082}$ \\
\hline
\multirow{ 1}{*}{$n= 139 $}&$(1+2^{9})^{17408530362059304982034022473992637175}\cdot  (1+2^{17})^{457116770994992690994328817697837300}$ \\
\hline
\multirow{ 2}{*}{$n= 141 $}&$(1+2)^{1216799735702178355508978464575}\cdot(1+2^{51})^{110618157791107123228088951325}\cdot$\\&$ (1+2^{65})^{63210375880632641844622257900}\cdot(1+2^{121})^{56889338292569377660160032110}$ \\
\hline
\multirow{ 3}{*}{$n= 145 $}&$(1+2^{41})^{534639083977880631530660485081925505}\cdot(1+2^{49})^{76377011996840090218665783583132215}\cdot$\\
&$(1+2^{135})^{305508047987360360874663134332528860}\cdot(1+2^{137})^{4144489023084345980857833217689345}\cdot$\\
&$(1+2^{139})^{534639083977880631530660485081925505}$ \\
\hline

\multirow{ 3}{*}{$n= 147 $}&$(1+2^{15})^{17249119260282613026137951811234}\cdot(1+2^{59})^{51747357780847839078413855433702}\cdot$\\
&$(1+2^{67})^{43122798150706532565344879528085}\cdot(1+2^{71})^{96786724738252439757774062940813}\cdot$\\
&$(1+2^{73})^{8624559630141306513068975905617}$ \\
\hline
$n= 149 $&$(1+2^{9})^{29933886172524326364132038117944134026225}$ \\
\hline
\multirow{ 3}{*}{$n= 151 $}&$(1+2^{35})^{47657859344287051433215338407025}\cdot(1+2^{53})^{47657859344287051433215338407025}\cdot$\\
&$(1+2^{55})^{95315718688574102866430676814050}\cdot(1+2^{81})^{95315718688574102866430676814050}\cdot$\\
&$(1+2^{119})^{47657859344287051433215338407025}$ \\
\hline 
\multirow{ 3}{*}{$n= 153 $}&$(1+2^{67})^{74105228687928761744692516074690}\cdot(1+2^{85})^{566868663181345103125787385}\cdot$\\
&$(1+2^{101})^{185263071719821904361731290186725}\cdot(1+2^{115})^{24701742895976253914897505358230}\cdot$\\
&$(1+2^{125})^{66694705819135885570223264467221}\cdot(1+2^{147})^{22231568606378628523407754822407}$ \\
\hline
\multirow{ 4}{*}{$n= 155 $}&$(1+2^{17})^{62671642336461616797239779725}\cdot(1+2^{43})^{35812367049406638169851302700}\cdot$\\
&$(1+2^{51})^{17906183524703319084925651350}\cdot(1+2^{59})^{2984363920783886514154275225}\cdot$\\
&$(1+2^{73})^{26859275287054978627388477025}\cdot(1+2^{101})^{17906183524703319084925651350}\cdot$\\
&$(1+2^{123})^{865014224607504542831738415625}$ \\
\hline
\multirow{ 2}{*}{$n= 157 $}&$(1+2^{15})^{1707444675887681902216221662393643900}\cdot (1+2^{17})^{341488935177536380443244332478728780}\cdot$\\
&$(1+2^{29})^{3841750520747284279986498740385698775}\cdot (1+2^{45})^{30734004165978274239891989923085590200}$ \\
\hline
\multirow{ 4}{*}{$n= 159 $}&$(1+2^{57})^{9362516203257056384802075}\cdot(1+2^{65})^{44348760962796582875378250}\cdot$\\
&$(1+2^{69})^{4434876096279658287537825}\cdot(1+2^{89})^{38435592834423705158661150}\cdot$\\
&$(1+2^{101})^{79907677410444293469150}\cdot(1+2^{123})^{5913168128372877716717100}\cdot$\\
&$(1+2^{127})^{2002242486366485713350}\cdot(1+2^{137})^{48783637059076241162916075}$ \\
\hline
 
\multirow{ 3}{*}{$n= 161 $}&$(1+2^{29})^{93343471924356402246389002034385}\cdot(1+2^{43})^{62228981282904268164259334689590}\cdot$\\
&$(1+2^{47})^{82971975043872357552345779586120}\cdot(1+2^{87})^{217801434490164938574907671413565}\cdot$\\
&$(1+2^{157})^{248915925131617072657037338758360}$ \\
\hline
\newpage
\hline
\multirow{ 2}{*}{$n= 163 $}&$(1+2^{13})^{168486137937535997136381884224759350}\cdot(1+2^{87})^{5616204597917866571212729474158645}\cdot$\\
&$(1+2^{89})^{1925555862143268538701507248282964}\cdot(1+2^{119})^{78626864370850131996978212638221030}$ \\
\hline
\multirow{ 5}{*}{$n= 165 $}&$(1+2^{61})^{39948352158132627380823842541450}\cdot(1+2^{105})^{9321282170230946388858896593005}\cdot$\\
&$(1+2^{109})^{13316117386044209126941280847150}\cdot(1+2^{113})^{15535470283718243981431494321675}\cdot$\\
&$(1+2^{119})^{19974176079066313690411921270725}\cdot(1+2^{123})^{79896704316265254761647685082900}\cdot$\\
&$(1+2^{127})^{3698921496123391424150355790875}\cdot(1+2^{135})^{5326446954417683650776512338860}\cdot$\\
&$(1+2^{147})^{4438705795348069708980426949050}\cdot(1+2^{157})^{13316117386044209126941280847150}$ \\
\hline 
\multirow{ 2}{*}{$n= 167 $}&$(1+2^{5})^{350060123390813635242448130256489390771914057740}\cdot$\\
&$(1+2^{33})^{17503006169540681762122406512824469538595702887}$ \\
\hline
\multirow{ 3}{*}{$n= 169 $}&$(1+2^{15})^{7089726406583596958466287242575266870940}\cdot$\\
&  $(1+2^{25})^{7286663251210919096201461888202357617355}\cdot$\\
& $(1+2^{55})^{29146653004843676384805847552809430469420}$\\
\hline
\multirow{ 4}{*}{$n= 171 $}&$(1+2^{49})^{49149123355767553400835304429946193135}\cdot$ $(1+2^{55})^{9829824671153510680167060885989238627}\cdot$\\
&$(1+2^{77})^{9829824671153510680167060885989238627}\cdot$ $ (1+2^{99})^{9829824671153510680167060885989238627}\cdot$\\
&$(1+2^{109})^{9829824671153510680167060885989238627}\cdot$ $(1+2^{159})^{9829824671153510680167060885989238627}\cdot$\\
&$(1+2^{163})^{9829824671153510680167060885989238627}$ \\
\hline
\multirow{ 2}{*}{$n= 173 $}&$(1+2^{21})^{752712011377013221558430642567508556008861}\cdot (1+2^{61})^{752712011377013221558430642567508556008861}\cdot$\\
&$(1+2^{69})^{752712011377013221558430642567508556008861}\cdot (1+2^{77})^{31613904477834555305454086987835359352372162}$ \\
\hline
\multirow{ 5}{*}{$n= 175 $}&$(1+2^{19})^{281198684623467689204913686779425}\cdot$ $(1+2^{43})^{540766701198976325394064782268125}\cdot$\\
&$(1+2^{61})^{12329480787336660218984677035713250}\cdot$ $(1+2^{97})^{129784008287754318094575547744350}\cdot$\\
&$(1+2^{105})^{6651799969190141587990774087125}\cdot$ $(1+2^{123})^{11139794044698912303117734514723375}\cdot$\\
&$(1+2^{141})^{4975053651030582193625395996866750}\cdot$ $(1+2^{163})^{1838606784076519506339820259711625}\cdot$\\
&$(1+2^{165})^{1114982889070054279163020169625}$ \\
\hline

\multirow{ 3}{*}{$n= 177 $}&$(1+2^{19})^{102104448867391604403906908898445293975}\cdot (1+2^{57})^{4288386852430447384964090173734702346950}\cdot$\\
        &$(1+2^{93})^{81683559093913283523125527118756235180}\cdot (1+2^{103})^{1286516055729134215489227052120410704085}\cdot$\\
        &$(1+2^{133})^{1837880079613048879270324360172015291550}\cdot (1+2^{163})^{568867643689753224536052778148480923575}$ \\
\hline
\multirow{ 2}{*}{$n= 179 $}&$(1+2^{21})^{1713334865061395551905989490523888483510297545}\cdot$ \\
&$(1+2^{29})^{17731656063809998410201669013040877638868476180}$ \\
\hline
 
\multirow{ 3}{*}{$n= 181 $} & $(1+2^{11})^{16825262628616094460214312446168738419261880}\cdot$\\
&  $(1+2^{21})^{21031578285770118075267890557710923024077350}\cdot$ \\
          & $(1+2^{31})^{2103157828577011807526789055771092302407735}\cdot$ \\
\hline

\multirow{ 3}{*}{$n= 183 $} & $(1+2^{17})^{5301342361191869603932356617428842175355175}\cdot$\\
         & $(1+2^{75})^{171011043909415148513946987658994908882425}\cdot$ \\
          & $(1+2^{177})^{1279634363046313352673327459379375697499525}$ \\
\hline
\multirow{ 5}{*}{$n= 185 $} & $(1+2^{21})^{24289606148175875174394125504156277451293856980}\cdot$\\ 
          &$(1+2^{25})^{56675747679077042073586292843031314053018999620}\cdot$ \\
          & $(1+2^{61})^{6072401537043968793598531376039069362823464245}\cdot$\\
         & $(1+2^{73})^{28337873839538521036793146421515657026509499810}\cdot$ \\
          & $(1+2^{121})^{3333867510533943651387428990766547885471705860}$ \\
\hline
\multirow{ 4}{*}{$n= 187 $} & $(1+2^{9})^{52332852605905914814562661586433935229041537035}\cdot$\\
        & $(1+2^{19})^{52332852605905914814562661586433935229041537035}\cdot$ \\
          & $(1+2^{41})^{203629776676676711340710745472505584548799755}\cdot$\\
         & $ (1+2^{89})^{104665705211811829629125323172867870458083074070}$ \\
\hline
\multirow{ 5}{*}{$n= 189 $} & $(1+2^{7})^{954808575327093401067436576289941581}\cdot (1+2^{11})^{76741427691674298191288254054776183774}\cdot$ \\
          & $(1+2^{21})^{862835224381042468504831900414614}\cdot (1+2^{33})^{8687708795283882814108104232616171748}\cdot$ \\
& $(1+2^{39})^{6515781596462912110581078174462128811}\cdot (1+2^{73})^{60813961566987179698756729628313202236}\cdot$ \\
& $(1+2^{89})^{53091553748957061641771748088209938460}\cdot (1+2^{125})^{62503238277181268023722194340210791187}\cdot$ \\
& $(1+2^{157})^{121386597889660918208232678583498177479}$ \\
\hline
\newpage
\hline
\multirow{ 5}{*}{$n= 191 $} & $(1+2^{55})^{1025444869877616060103489665965652402208725}\cdot$\\ 
          & $(1+2^{77})^{1860992541629747664632259023419146952156575}\cdot$ \\
& $(1+2^{179})^{1063424309502712951218433727668083972660900}\cdot$\\
  & $ (1+2^{183})^{2126848619005425902436867455336167945321800}\cdot$ \\
& $(1+2^{189})^{138245160235352683658396384596850916445917}$ \\
\hline
\multirow{ 3}{*}{$n= 193 $} & $(1+2^{5})^{690644713229389686815238348164730529976649425988651}\cdot$\\
      & $(1+2^{9})^{76738301469932187423915372018303392219627713998739}\cdot$ \\
& $(1+2^{13})^{98663530461341383830748335452104361425235632284093}$ \\
\hline
\multirow{ 3}{*}{$n= 195 $} & $(1+2^{31})^{397218618589975176651322156679935564968150}\cdot$\\
   & $ (1+2^{129})^{42129247426209488432715986314538620526925}\cdot$ \\
& $(1+2^{163})^{66203103098329196108553692779989260828025}$ \\
\hline
\multirow{ 2}{*}{$n= 197 $} & $(1+2^{5})^{47788807121282329843547481918370106279929779662675299482}\cdot $\\
& $(1+2^{13})^{7964801186880388307257913653061684379988296610445883247}$ \\
\hline
\multirow{ 2}{*}{$n= 199 $} & $(1+2^{7})^{9012349943070385113930109307619809450853768536749084363}\cdot $\\
& $(1+2^{27})^{234321098519830012962182841998115045722197981955476193438}$ \\
\hline
\multirow{ 4}{*}{$n= 201 $} & $(1+2^{67})^{201}\cdot (1+2^{107})^{915031412652462933978517192307754031960084170}\cdot$ \\
& $(1+2^{119})^{14182986896113175476667016480770187495381304635}\cdot$\\
 & $(1+2^{145})^{3507620415167774580250982570513057122513655985}\cdot$ \\
& $(1+2^{177})^{83184673877496630361683381118886730178189470}$ \\
\hline
\multirow{ 7}{*}{$n= 203 $} & $(1+2^{57})^{718674251279934430341052428990271148449812}\cdot$\\
   & $(1+2^{81})^{51205540403695328161799985565556819327049105}\cdot$ \\
& $(1+2^{111})^{6288399698699426265484208753664872548935855}\cdot$\\
& $(1+2^{127})^{18865199096098278796452626260994617646807565}\cdot$ \\
& $(1+2^{153})^{2695028442299754113778946608713516806686795}\cdot$\\
&$ (1+2^{175})^{64680682615194098730694718609124403360483080}\cdot$ \\
& $(1+2^{193})^{8085085326899262341336839826140550420060385}$ \\
\hline
 
\multirow{ 6}{*}{$n= 205 $}&$(1+2^{43})^{9467961424350347777980448419013907428323430550}\cdot$\\
 & $(1+2^{67})^{29756450190815378730795695031186566203302210300}\cdot$\\
&$(1+2^{131})^{676282958882167698427174887072421959165959325}\cdot$\\
&$(1+2^{145})^{7213684894743122116556532128772500897770232800}\cdot$\\
& $ (1+2^{157})^{772894810151048798202485585225625096189667800}\cdot$\\
& $(1+2^{187})^{16907073972054192460679372176810548979148983125}$\\

\hline
\multirow{ 6}{*}{$n= 207 $} & $(1+2^{5})^{33192619261066535128289058132930982761836775}\cdot$\\
&$(1+2^{121})^{46469666965493149179604681386103375866571485}\cdot$ \\
& $(1+2^{127})^{102131136187897031163966332716710716190267}\cdot$\\
& $(1+2^{179})^{19915571556639921076973434879758589657102065}\cdot$ \\
& $(1+2^{187})^{39831143113279842153946869759517179314204130}\cdot$\\
& $(1+2^{199})^{3983114311327984215394686975951717931420413}$ \\
\hline

\multirow{ 4}{*}{$n= 209 $} & $(1+2^{59})^{270250337042154559392793016292662090505143401152585}\cdot$\\
&$ (1+2^{61})^{360333782722872745857057355056882787340191201536780}\cdot$ \\
& $(1+2^{87})^{180166891361436372928528677528441393670095600768390}\cdot$\\
&$ (1+2^{97})^{25738127337348053275504096789777341952870800109770}$ \\
\hline
\multirow{ 3}{*}{$n= 211 $} & $(1+2^{9})^{1154929871973565275224108161553051512738585560465529713690}\cdot$\\&
$ (1+2^{13})^{64162770665198070845783786752947306263254753359196095205}\cdot$ \\
& $(1+2^{55})^{2309859743947130550448216323106103025477171120931059427380}$ \\
\hline
 
\multirow{ 6}{*}{$n= 213 $} & $(1+2^{11})^{5497736589318700986099894638477660690688115321335}\cdot$\\
 & $(1+2^{41})^{140967604854325666310253708678914376684310649265}\cdot$ \\
& $(1+2^{85})^{1832578863106233662033298212825886896896038440445}\cdot$\\
& $(1+2^{119})^{2356172823993728994042811987918997438866335137715}\cdot$ \\
& $(1+2^{161})^{1099547317863740197219978927695532138137623064267}\cdot$\\
& $(1+2^{203})^{1570781882662485996028541325279331625910890091810}$ \\
\hline 

\multirow{ 5}{*}{$n= 215 $} & $(1+2^{41})^{66443114885376278757699109185773253174500402025}\cdot$\\
& $(1+2^{97})^{31006786946508930086926250953360851481433520945}\cdot$ \\
& $(1+2^{119})^{186040721679053580521557505720165108888601125670}\cdot$\\
& $(1+2^{151})^{186040721679053580521557505720165108888601125670}\cdot$ \\
& $(1+2^{203})^{279061082518580370782336258580247663332901688505}$ \\
\hline
\multirow{ 5}{*}{$n= 217 $} & $(1+2^{13})^{1126940817087752014462533930787563178937611179075}\cdot$\\
& $(1+2^{57})^{125215646343083557162503770087507019881956797675}\cdot$ \\
& $(1+2^{89})^{50086258537233422865001508035002807952782719070}\cdot$\\
& $(1+2^{185})^{751293878058501342975022620525042119291740786050}\cdot$ \\
& $(1+2^{215})^{9537309672323317621714046370301517484866488275}$ \\
\hline
\multirow{ 7}{*}{$n= 219 $} & $(1+2^{19})^{2066997275827785054568851212103878502402165134820}\cdot$\\
  &$(1+2^{63})^{15659070271422614049764024334120291684864887385}\cdot$ \\
& $(1+2^{107})^{8267989103311140218275404848415514009608660539280}\cdot$\\
 & $(1+2^{113})^{861248864928243772737021338376616042667568806175}\cdot$ \\
& $(1+2^{115})^{2239247048813433809116255479779201710935678896055}\cdot$\\
& $(1+2^{139})^{9802832608939360014892925802660670404346311615}\cdot$ \\
& $(1+2^{189})^{574165909952162515158014225584410695111712537450}$ \\
\hline 
\multirow{ 3}{*}{$n= 221 $}&$(1+2^{9})^{3989486683832532366484265709728099870565523986917322701595}\cdot$\\
&$ (1+2^{19})^{3989486683832532366484265709728099870565523986917322701595}\cdot$\\
&$(1+2^{29})^{3989486683832532366484265709728099870565523986917322701595}$\\
\hline

\multirow{ 5}{*}{$n= 223 $}&$(1+2^{17})^{1014848095919801109402176209373936431577327592183}\cdot$\\
& $(1+2^{91})^{1503478660621927569484705495368794713447892729160}\cdot$\\
&$(1+2^{103})^{1691413493199668515670293682289894052628879320305}\cdot$\\
&$(1+2^{113})^{563804497733222838556764560763298017542959773435}\cdot$\\
&$(1+2^{145})^{2631087656088373246598234616895390748533812276030}$\\
\hline
\multirow{ 7}{*}{$n= 225 $}&$(1+2^{29})^{64638048753257449056214060125}\cdot(1+2^{31})^{788456198653595814230254674000}\cdot$\\

& $(1+2^{49})^{5797472048923498634045990250}\cdot(1+2^{51})^{84063344709390730193666858625}\cdot$\\
&$(1+2^{85})^{4479864765077248944490083375}\cdot(1+2^{97})^{1809865365091208573573993683500}\cdot$\\

&$(1+2^{99})^{365703246128755015876741500}\cdot(1+2^{105})^{5302697068866947730212751750}\cdot$\\

& $(1+2^{115})^{68066717742077353015043250}\cdot (1+2^{131})^{5270429135384998758223627500}\cdot$\\

& $(1+2^{203})^{152315402012626464112662834750}\cdot(1+2^{207})^{828774981539291054730665424375}\cdot$\\
&$(1+2^{217})^{438130774024554946771130154075}\cdot(1+2^{219})^{94077160066622227834291750875}$\\
\hline
\multirow{ 2}{*}{$n= 227 $}&$(1+2^{5})^{59383142438657794704063431302361624145812280978139757521155099815}\cdot$\\
&$(1+2^{7})^{118766284877315589408126862604723248291624561956279515042310199630}$\\
\hline
\multirow{ 3}{*}{$n= 229$}&$(1+2^{9})^{147385697387219867509609565535361987370219507869771466051}\cdot$\\
&$(1+2^{21})^{1768628368646638410115314786424343848442634094437257592612}\cdot$\\
& $(1+2^{121})^{589542789548879470038438262141447949480878031479085864204}$\\
\hline 
\multirow{ 8}{*}{$n= 231$} & $(1+2^{41})^{459189704451443127647362594916155732882807466830218}\cdot$\\ 
&$ (1+2^{55})^{1567200356489566988557551518485173149770673948226}\cdot$\\ &$ (1+2^{69})^{2755138226708658765884175569496934397296844800981308}$ \\
& $(1+2^{91})^{1147974261128607819118406487290389332207018667075545}\cdot$\\ 
&$ (1+2^{131})^{535721321860016982255256360735515021696608711301921}\cdot$\\ 
&$ (1+2^{157})^{918379408902886255294725189832311465765614933660436}$ \\
& $(1+2^{159})^{1147974261128607819118406487290389332207018667075545}\cdot$\\
&$ (1+2^{209})^{2350800534734350482836327277727759724656010922339}$ \\

\hline
 
\multirow{ 4}{*}{$n= 233$} & $(1+2^{15})^{17038321523200602421013846948443997824061934891243478185488130}\cdot$\\
&$ (1+2^{23})^{2839720253866767070168974491407332970676989148540579697581355}$ \\
& $(1+2^{49})^{2839720253866767070168974491407332970676989148540579697581355}\cdot $\\
& $(1+2^{113})^{2839720253866767070168974491407332970676989148540579697581355}$ \\
\hline
\newpage
\hline

\multirow{ 7}{*}{$n= 235$} & $(1+2^{73})^{23191222403616672243206263595493280420250247335633250}\cdot$\\ 
&$ (1+2^{105})^{26891949382917205047973220552220931551141244250893875}$ \\
& $(1+2^{119})^{8141599354461172170487305304800832487960193213573375}\cdot$\\ 
&$ (1+2^{141})^{363659699007176721748930895080669461243705878250}$ \\
& $(1+2^{167})^{14062762521342024658114436435565074297385788277990375}\cdot$\\ 
&$ (1+2^{171})^{23684652667523409950508524523056967237702380257668000}$ \\
& $(1+2^{197})^{2556865912971277210566261170102740781342870595998250}$ \\
\hline

\multirow{ 7}{*}{$n= 237$} & $(1+2^{9})^{312620627852416761979153117722703908013332701360090433}\cdot$\\ 
&$ (1+2^{169})^{3647240658278195556423453040098212260155548182534388385}$ \\
& $(1+2^{173})^{1042068759508055873263843725742346360044442337866968110}\cdot$\\ 
& $ (1+2^{175})^{173678126584675978877307287623724393340740389644494685}$ \\
& $(1+2^{199})^{429087136268023006638053298835083795312417433239339810}\cdot$\\ 
& $ (1+2^{225})^{416827503803222349305537490296938544017776935146787244}$ \\
& $(1+2^{233})^{67961006054873209125902851678848675655072326382628355}$ \\

\hline
\multirow{ 6}{*}{$n= 239 $}&$(1+2^{59})^{137903930258717580167839711391793163262983138814261779866}\cdot$\\&$(1+2^{73})^{25172939650400828125875502873105101230544541212127150293}\cdot$\\&$ (1+2^{87})^{206855895388076370251759567087689744894474708221392669799}\cdot$\\&$ (1+2^{197})^{298791848893888090363652708015551853736463467430900523043}\cdot$\\&$ (1+2^{199})^{91935953505811720111893140927862108841988759209507853244}\cdot$\\&$ (1+2^{237})^{6566853821843694293706652923418722060142054229250560946}$\\

\hline 

\multirow{ 2}{*}{$n= 241 $}&$(1+2^{7})^{916414411031455987244373856003197176202834172185057465573737668369009}\cdot$\\&$ (1+2^{9})^{5498486466188735923466243136019183057217005033110344793442426010214054}$\\

\hline

\multirow{ 9}{*}{$n= 243 $}&$(1+2^{23})^{785013025560884300793728032156196770104389659275}\cdot$\\&
$(1+2^{31})^{22335098635205076340574688781013129860710668130}\cdot$\\&$ (1+2^{35})^{847419918806310249392392603750204044715198879050}\cdot$\\&$ (1+2^{61})^{132039847814006480719279777793636444176554243945}\cdot$\\&$ (1+2^{151})^{41135370735706828274307725135899531886322869175}\cdot$\\&$ (1+2^{155})^{725890705644164981068677385382926720473096714225}\cdot$\\&$ (1+2^{163})^{160944093106624814807082316216124023996297461525}\cdot$\\&$ (1+2^{177})^{387579652787382207086443128846992547582920417550}\cdot$\\&
$(1+2^{181})^{91968053203785608461189894980642299426455692300}$\\
\hline

\multirow{ 7}{*}{$n= 245 $}&$(1+2^{69})^{404534281273826986829987345146663806009193260698421162909645}\cdot$\\&$ (1+2^{117})^{652474647215849978758044105075264203240634291449066391789750}\cdot$\\&$ (1+2^{125})^{1096157407322627964313514096526443861444265609634431538206780}\cdot$\\&$ (1+2^{141})^{1057008928489676965588031450221928009249827552147487554699395}\cdot$\\&$ (1+2^{151})^{404534281273826986829987345146663806009193260698421162909645}\cdot$\\&$ (1+2^{165})^{1578988646262356948594466734282139371842334985306740668131195}\cdot$\\&$ (1+2^{167})^{404534281273826986829987345146663806009193260698421162909645}$\\

\hline
\multirow{ 6}{*}{$n= 247$} & $(1+2)^{134057357388441380704540286280333486890775035828909707815645} \cdot$ \\
& $(1+2^{9})^{130787665744820859223941742712520475015390278857472885673800}\cdot$ \\
& $(1+2^{35})^{16348458218102607402992717839065059376923784857184110709225}\cdot$ \\
& $(1+2^{71})^{85011982734133558495562132763138308760003681257357375687970}\cdot$ \\
& $(1+2^{147})^{81742291090513037014963589195325296884618924285920553546125}\cdot$ \\
& $(1+2^{195})^{15040581560654398810753300411939854626769882068609381852487}$ \\
\hline
 & $(1+2^{97})^{292527702190729434230102491312771097283901482612310325863937852070}\cdot$ \\
& $(1+2^{119})^{204769391533510603961071743918939768098731037828617228104756496449}\cdot$ \\
$n= 249$& $(1+2^{137})^{585055404381458868460204982625542194567802965224620651727875704140}\cdot$ \\
& $(1+2^{173})^{633810021413247107498555397844337377448453212326672372705198679485}\cdot$ \\
& $(1+2^{199})^{536300787349670629421854567406747011687152718122568930750552728795}$ \\
\hline

\end{longtable}
}

\section{Further comments}

One can go further than our choice of bound, namely $250$, by using our method. We have yet to encounter an exponent for which we cannot apply Algorithm~\ref{algo1}. If there is such an exponent~$n$ for which we cannot find $a_j$, $d_{i,j}$ as above, then we can involve cubics in the factorization of $-1\pmod{2^n-1}$. More precisely, we do something similar as above using $2^{a_i}+2^{b_i}+1$ and check if we can find such powers $a_i,b_i$ such that the number above is a quadratic nonresidue modulo $q_i$ and quadratic residue modulo $q_j$ for all $j\neq i$. The rest of Algorithm~\ref{algo1} runs unchanged.

While, via Carlitz' result, we know that any permutation can be decomposed as a composition of inverses and affine functions, it would also be interesting to  check whether one can modify our method in this paper to other exponents, other than the inverse and surely, the Gold $2^k+1$ exponents, to directly find their decomposition in quadratics, or quadratics and cubics, and we leave that to future work and the interested reader.

\vskip.5cm
\noindent
{\bf Acknowledgements.}
The first and the third-named authors  worked on this paper during visits to the Max Planck Institute for Software Systems in Saarbr\"ucken, Germany in Spring of 2022 and 2023. They thank Professor J. Ouaknine for the invitation and the Institute for hospitality and support. During the final stages of the preparation of this paper, the first-named author was a fellow at the Stellenbosch Institute for Advanced Study. He thanks this Institution for hospitality and support.


\begin{thebibliography}{9999}

\bibitem{Car53}
L. Carlitz, ``Permutations in a finite field'', {\em Proc. Amer. Math. Soc.\/} {\bf 4} (1953), 538.

\bibitem{HT} R. T. Hall and G.  Tenenbaum, {\it Divisors}, Cambridge Tracts in Mathematics, {\bf 90}. Cambridge University Press, Cambridge, 1988.
 
 \bibitem{KoLa} A. Kontorovich and J. Lagarias, ``On toric orbits in the affine sieve'', {\it Exp. Math\/}. {\bf 30} (2021), 575--587.

 \bibitem{LS23} F. Luca and P. St\u anic\u a,
 ``Asymptotics on a class of $\mathcal S$-unit integers'',
 to appear in {\em Periodica Math. Hungarica}.
 
 \bibitem{LuSt} F. Luca and P. St\u anic\u a, ``Prime divisors of Lucas sequences and a conjecture of Ska\l ba'',
{\it Int. J. Number Theory \/} {\bf 1} (2005), no. 4, 583--591. 

\bibitem{M97}
P. Moree, ``On the divisors of $a^k+b^k$'', {\em Acta Arith.\/} LXXX.3 (1997), 197--212. 
 
\bibitem{MuPo} L. Murata and C. Pomerance, ``On the largest prime factor of a Mersenne number'', in {\it Number Theory}, 209--218, CRM Proc. Lecture Notes, 36, Amer. Math. Soc., Providence, RI, 2004. 

\bibitem{NNR19}
S. Nicoka,    V. Nikov, V. Rijmen,
``Decomposition of permutations in a finite field'', {\em Cryptogr. Commun.\/} {\bf 11} (2019), 379--384.

\bibitem{P23}
G. Petrides, ``On decompositions of permutation polynomials into quadratic and cubic power permutations'', 
{\em Cryptogr. Commun.\/} {\bf 15} (2023), 199--207.

\bibitem{Ro}  A. Rotkiewicz, ``Applications of Jacobi's symbol to Lehmer's numbers", {\it Acta Arith.\/} {\bf 42} (1983), 163--187. 

\end{thebibliography}
\end{document}